\newcommand{\TFAE}{The following conditions are equivalent:}
\newcommand{\wrt}{with respect to}
\newcommand{\by}{\mathbf{y} }
\newcommand{\n}{\mathfrak{n} }
\newcommand{\m}{\mathfrak{m} }
\newcommand{\Z}{\mathbb{Z} }
\newcommand{\Q}{\mathbb{Q} }
\newcommand{\rt}{\rightarrow}
\newcommand{\ov}{\overline}
\newcommand{\hgt}{\operatorname{height}}
\newcommand{\soc}{\operatorname{soc}}
\newcommand{\Mod}{\operatorname{Mod}}
\newcommand{\chars}{\operatorname{char}}
\newcommand{\Aut}{\operatorname{Aut}}
\newcommand{\Hom}{\operatorname{Hom}}
\newcommand{\frank}{\operatorname{frank}}
\theoremstyle{plain}
\newtheorem{theorem}{Theorem}[section]
\newtheorem{corollary}[theorem]{Corollary}
\newtheorem{lemma}[theorem]{Lemma}
\theoremstyle{definition}
\newtheorem{remark}[theorem]{Remark}
\newtheorem{example}[theorem]{Example}
\theoremstyle{remark}
\newtheorem{claim*}{\it Claim:}
\begin{document}
\title{On $G$-invariant Gorenstein ideals}
\author{Tony.~J.~Puthenpurakal}
\date{\today}
\address{Department of Mathematics, IIT Bombay, Powai, Mumbai 400 076}

\email{tputhen@math.iitb.ac.in}
\subjclass{Primary 13A50,  Secondary 13H10}
\keywords{Ring of invariant's, Gorenstein rings, $G$-invariant ideals.}
\begin{abstract}
Let $k$ be a field and $G \subseteq Gl_n(k)$ be a finite group with $|G|^{-1} \in k$. Let $G$ act linearly on $A = k[X_1, \ldots, X_n]$ and let $A^G$ be the ring of invariant's. Suppose there does not exist any non-trivial one-dimensional representation of $G$ over $k$. Then  we show that if $Q$ is a $G$-invariant homogeneous ideal of $A$ such that $A/Q$ is a Gorenstein ring then $A^G/Q^G$ is also a Gorenstein ring.
\end{abstract}
\maketitle

\section{introduction}
Let $k$ be a field. Let $A=k[X_1, \ldots, X_n]$ and let $G$ be a finite subgroup of $Gl_n(k)$.
Let $G$ act linearly on $A$. Assume $|G|^{-1} \in k$. \emph{Throughout this paper $G$ will not be the trivial group.}
Also assume $A^G$ the ring of invariant's of $G$ is Gorenstein.
A natural  question is  that if $Q$ is a $G$-invariant homogeneous ideal of $A$ such that
$A/Q$ is a Gorenstein ring then is $A^G/Q^G$ also a Gorenstein ring? Simple examples shows that this is not always the case (see Example \ref{ex}). The objective of this paper is that a simple group theoretic condition ensures this. We prove
\begin{theorem}\label{main-graded}
Let $k$ be a field and $G \subseteq Gl_n(k)$ be a finite group with $|G|^{-1} \in k$. Assume further that there does not exist any non-trivial one-dimensional representation of $G$ over $k$. Let $G$ acts linearly on $A = k[X_1, \ldots, X_n]$ and let $A^G$ be the ring of invariants of $G$. If $Q$ is a homogeneous $G$-invariant ideal in $A$ such that $A/Q$ is a Gorenstein ring then $A^G/Q^G$ is also a Gorenstein ring. Furthermore $a(A^G/Q^G) = a(A/Q)$.
\end{theorem}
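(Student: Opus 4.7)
The plan is to set $R = A/Q$ and $B = A^G/Q^G$, and to establish that $B$ is Gorenstein with $a(B) = a(R)$ by identifying the canonical module of $B$ with the $G$-invariants of the canonical module of $R$.

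First I would exploit the Reynolds operator $\rho \colon A \to A^G$, available because $|G|^{-1} \in k$. Since $Q$ is $G$-invariant, $\rho$ descends to an $A^G$-linear retraction $\bar\rho \colon R \to R$ whose image is $(A/Q)^G = A^G/Q^G = B$, so $R = B \oplus N$ as graded $A^G$-modules with $N = \ker\bar\rho$. Because $\sqrt{\m^G A} = \m$, we have $H^i_{\m^G}(M) = H^i_\m(M)$ for every $A$-module $M$. Applied to $R$ together with the Reynolds splitting this yields $H^i_{\m^G}(B) \oplus H^i_{\m^G}(N) = H^i_\m(R)$, and since $R$ is Cohen--Macaulay of dimension $d := \dim R$ the right side vanishes for $i < d$. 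Hence $B$ is Cohen--Macaulay of the same dimension.

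Next, since $R$ is graded Gorenstein, $\omega_R \cong R(c)$ as graded $R$-modules for some integer $c$ (with $c = -a(R)$). I would upgrade this to a $G$-equivariant isomorphism: the set of graded $R$-module isomorphisms $R(c) \to \omega_R$ is a $k^\times$-torsor on which $G$ acts by a character $\chi \colon G \to k^\times$, and the hypothesis that $G$ has no non-trivial one-dimensional representation forces $\chi$ to be trivial, so a $G$-equivariant isomorphism exists. The heart of the proof is then the identity $\omega_B \cong (\omega_R)^G$ of graded $B$-modules, for it gives $\omega_B \cong (R(c))^G = B(c)$, whence $B$ is Gorenstein with $a(B) = a(R)$.

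To prove $\omega_B \cong (\omega_R)^G$ I would appeal to graded local duality, which identifies $\omega_R$ and $\omega_B$ with the graded Matlis duals of $H^d_\m(R)$ and $H^d_{\m^G}(B)$ respectively. Using the exactness of $(-)^G$ (since $|G|^{-1} \in k$) applied to the additive $k$-linear functor $H^d_{\m^G}(-)$, one obtains $H^d_{\m^G}(B) = H^d_{\m^G}(R^G) = H^d_{\m^G}(R)^G = H^d_\m(R)^G$. Graded Matlis duality commutes with $(-)^G$ because $(V^G)^\vee = (V^\vee)^G$ for any locally finite graded $G$-module $V$. Dualizing then gives $\omega_B = (H^d_\m(R)^G)^\vee = ((H^d_\m(R))^\vee)^G = (\omega_R)^G$, which is what was needed. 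The main obstacle is the careful book-keeping of grading shifts in graded local duality so that the $a$-invariants match on the nose; apart from this, the group-theoretic hypothesis enters in exactly one place, to trivialize the character $\chi$ and promote the isomorphism $\omega_R \cong R(c)$ to a $G$-equivariant one.
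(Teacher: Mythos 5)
Your proposal is correct, but it takes a genuinely different route from the paper's. The paper reduces to the Artinian case (first killing a maximal regular sequence inside $Q^G$, then a homogeneous system of parameters) and works there with the skew group ring: its key technical input is that restriction gives an isomorphism ${}^*\Hom_{R*G}(M,R) \cong {}^*\Hom_{R^G}(M^G,R^G)$ for every finitely generated graded $R*G$-module $M$ (Lemma \ref{iso}), which, applied to $M = R/Q$ together with the fact that ${}^*\Hom_R(R/Q,R)\cong R/Q(s)$ even as $R*G$-modules, identifies ${}^*\Hom_{R^G}(R^G/Q^G,R^G)$ with $R^G/Q^G(s)$. You instead stay in arbitrary dimension and use graded local duality; your central identity $\omega_{R^G}\cong(\omega_R)^G$ --- obtained from exactness of $(-)^G$, independence of local cohomology of the base ring, and compatibility of graded Matlis duality with invariants --- replaces both the Artinian reduction and the Hom-isomorphism over $R*G$. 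Both arguments consume the group-theoretic hypothesis at the same single point: you use it to trivialize the character by which $G$ acts on the one-dimensional space of degree-zero graded maps $R(c)\to\omega_R$, exactly as the paper uses it in Lemma \ref{ginvar-gen} (a $G$-invariant generator of a cyclic module) and in Lemma \ref{ainv} (a $G$-invariant socle generator, giving $a(R)=a(R^G)$). What the paper's route buys is elementarity and portability: it avoids local cohomology altogether, and the $R*G$-Hom isomorphism transfers to the complete local case treated in Section 8. What your route buys is brevity, no dimension reduction, and a cleaner conceptual core, since $\omega_{R^G}\cong(\omega_R)^G$ holds with no hypothesis on $G$ and the hypothesis is needed only to untwist $(\omega_R)^G$ into $R^G(c)$. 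One bookkeeping remark: with the convention $a(T)=\max\{i: H^{\dim T}_{\m_T}(T)_i\neq 0\}$ one has $\omega_R\cong R(a(R))$, so your shift is $c=a(R)$ rather than $-a(R)$; this sign does not affect the conclusion $a(A^G/Q^G)=a(A/Q)$.
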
 
In the above Theorem the numbers $a(A^G/Q^G), a(A/Q)$ denotes the a-invariant of $A^G/Q^G$ and $A/Q$ respectively. 
\begin{remark}\label{intrormk}
1) It is easy that $G \subseteq SL_n(k)$ if $G$ does not have non-trivial one dimensional representations over $k$. So $A^G$ is Gorenstein by a result of Watanabe \cite[Theorem 1]{W}. 

2) The condition that $G$ has no non-trivial one-dimensional representation over $k$ depends both on $G$ and $k$. 
\begin{enumerate}[\rm(a)]
\item If $k=\mathbb{C}$. Then  there does not exist non-trivial group homomorphism
$ \eta: G \to \mathbb{C}^*$ other than the identity if and only if $G = [G, G]$. In particular, $G$ is not solvable. 
\item If $G=[G, G]$ then over any field $k$ there does not exist any one-dimensional representation of $G$ over $k$.
\item However for particular fields the condition $G=[G, G]$ can be relaxed. 

Take $k=\mathbb{R}$. Then if $G$ is any group of odd order ($\geq 3$) then there does not exists any non-trivial one-dimensional representation of $G$ over $\mathbb{R}$. In particular, there exists plenty of solvable groups which satisfy the assumption of our Theorem. See section 4 for some  sufficient condition on $G$ where $k=\mathbb{Z}/p \mathbb{Z}, \mathbb{Q}_p$, finite extensions of $\mathbb{Q}$ which yield an non-existence of non-trivial group homomorphism $G \to k^*$.

Finally we note that we can give a $G$-invariant version of a basic construction in \cite[3.2.11]{BH} to get plenty of $G$-invariant homogeneous ideals $Q$ in $A$ such that $A/Q$ is Gorenstein (for any finite group $G \subseteq Gl_n(k)$). 
\end{enumerate}
\end{remark}

Another natural question is whether an analogue of Theorem \ref{main-graded} holds in the complete case. 
\begin{theorem}\label{complete}
Let $R = k[[X_1,\ldots, X_n]]$. Let $G$ be a finite subgroup of $GL_n(k)$ acting linearly on $R$. Assume $|G|^{-1} \in k$. Further assume that $G$ has no non-trivial one dimensional 
representations over $k$.
Let $R^G$ be the ring of invariants of $G$.  Let $Q$ be a $G$-invariant Gorenstein ideal of $R$. Then $R^G/Q^G$ is also a Gorenstein ring.
\end{theorem}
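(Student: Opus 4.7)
The strategy is to mirror the argument for Theorem~\ref{main-graded} in the local setting, replacing $a$-invariant/Hilbert-series bookkeeping by canonical-module duality for finite morphisms. Put $S = R/Q$ and $T = R^G/Q^G$. Standard invariant theory under the hypothesis $|G|^{-1} \in k$ supplies: the inclusion $R^G \hookrightarrow R$ is module-finite; the Reynolds operator $\rho: R \to R^G$ is an $R^G$-linear splitting; taking $G$-invariants is exact on $R$-modules; and consequently $(R/Q)^G = R^G/Q^G$. Therefore $T \hookrightarrow S$ is a finite extension of complete local rings of the same Krull dimension, $T$ is a direct summand of $S$ as a $T$-module, and since $S$ is Cohen-Macaulay (being Gorenstein) so is $T$.

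Now I exploit duality. For $T \to S$ finite local and both Cohen-Macaulay of the same dimension, the standard adjunction \cite[3.3.7]{BH} furnishes a $G$-equivariant isomorphism of $S$-modules
\[ \omega_S \;\cong\; \Hom_T(S,\omega_T), \]
where $G$ acts on the right side via $(g\cdot\phi)(s) = \phi(g^{-1}s)$ and trivially on $\omega_T$ (since $\omega_T$ is intrinsic to $T$).

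Next I pin down the $G$-action on $\omega_S$ itself. Because $S$ is Gorenstein, $\omega_S$ is $S$-free of rank one. Fix a generator $\eta$ and write $g \cdot \eta = u_g \eta$ with $u_g \in S^\ast$; then $u_{gh} = u_g \cdot g(u_h)$, so $g \mapsto u_g$ is a $1$-cocycle for the $G$-action on $S^\ast$. Its reduction modulo $\mathfrak{m}_S$ lands in $Z^1(G, k^\ast) = \Hom(G, k^\ast)$ and is trivial by the no-non-trivial-one-dimensional-representation hypothesis. Hence $u_g \in 1+\mathfrak{m}_S$; and since $1+\mathfrak{m}_S$ admits a filtration with successive quotients $\mathfrak{m}_S^i/\mathfrak{m}_S^{i+1}$, each a $k$-vector space with $G$-action (and thus a projective $k[G]$-module, as $|G|^{-1} \in k$), we obtain $H^1(G, 1+\mathfrak{m}_S) = 0$. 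The refined cocycle is therefore a coboundary, and rescaling $\eta$ accordingly produces a $G$-invariant generator of $\omega_S$, yielding $\omega_S \cong S$ as $G$-equivariant $S$-modules.

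Finally, I take $G$-invariants of both sides of $\omega_S \cong \Hom_T(S,\omega_T)$. The left side is $(\omega_S)^G \cong S^G = T$. The right side is $\Hom_T(S,\omega_T)^G \cong \Hom_T(S_G, \omega_T)$, where $S_G$ denotes $G$-coinvariants; since $|G|^{-1} \in k$, the composition $T = S^G \hookrightarrow S \twoheadrightarrow S_G$ is an isomorphism, so the right side reduces to $\Hom_T(T,\omega_T) = \omega_T$. Combining gives $\omega_T \cong T$, so $T = R^G/Q^G$ is Gorenstein. The principal obstacle is the character-lifting argument of the third paragraph: translating the triviality of the $G$-action on $\omega_S/\mathfrak{m}_S\omega_S$ into a full $G$-equivariant identification $\omega_S \cong S$. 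This is precisely where the hypothesis on one-dimensional representations of $G$ is used, and it plays the role that $a$-invariant matching plays in the graded case.
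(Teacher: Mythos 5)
Your argument is correct, but it follows a genuinely different route from the paper. The paper first reduces to the Artinian case by killing a system of parameters of $R^G$, and then runs its skew-group-ring machinery: it upgrades $\Hom_S(S/Q,S)\cong S/Q$ to an $S*G$-isomorphism (Lemma \ref{n-follow}(2)) and invokes the isomorphism $\Hom_{S*G}(M,S)\cong \Hom_{S^G}(M^G,S^G)$ of Lemma \ref{iso}, which in turn rests on $\frank_{S^G}S=1$ and on $\rho\notin\n\Hom_{S^G}(S,S^G)$. You instead stay in arbitrary dimension, apply the canonical-module adjunction $\omega_S\cong\Hom_T(S,\omega_T)$ directly to the finite local map $T=R^G/Q^G\to S=R/Q$, trivialize the $G$-action on $\omega_S$ using the no-character hypothesis, and then take invariants via $S^G\xrightarrow{\sim}S_G$. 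This buys several things: you bypass the skew group ring, the $\frank$ computation, and the Artinian reduction entirely, and your argument does not presuppose that $R^G$ is Gorenstein --- taking $Q=0$ it actually reproves Watanabe's theorem under the stated hypothesis, whereas the paper uses that result as an input. Two small points. First, $\omega_S$ carries no a priori $G$-action, so you should say explicitly that you are transporting the $S*G$-structure $(g\cdot\phi)(s)=\phi(g^{-1}s)$ on $\Hom_T(S,\omega_T)$ (one checks $g\cdot(s\phi)=g(s)(g\cdot\phi)$, so this is a legitimate $S*G$-module); with that convention your cocycle computation is exactly right. Second, the vanishing $H^1(G,1+\m_S)=0$ via the filtration and a completeness/successive-approximation argument is correct but heavier than necessary: once $u_g\in 1+\m_S$ for all $g$, applying the Reynolds operator to the generator $\eta$ gives $\rho(\eta)=\bigl(\tfrac{1}{|G|}\sum_g u_g\bigr)\eta=(1+r)\eta$ with $r\in\m_S$, an invariant generator at once --- this is precisely the device of Lemma \ref{n-follow}(2) in the paper.
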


Although Theorem \ref{complete} implies Theorem \ref{main-graded} we believe the latter is "\textit{more natural}". So we give a detailed proof of Theorem \ref{main-graded}. For Theorem \ref{complete} we mostly sketch the proof. However we give detailed proof of two preliminary  results which do not follow from the graded case.

We now describe in brief the contents of this paper. In section two we discuss some preliminary results that we need. In section three we give a method for constructing $G$-invariant Gorenstein ideals.  In section four we give bountiful 
examples of Groups and fields satisfying our hypothesis. In section five we discuss some consequences for rings of invariant's if the group has no non-trivial one dimensional representations over $k$. The proof of Theorem \ref{main-graded}
is proved by first proving an Artininan analogue which we discuss in section 6.
In section seven we prove Theorem \ref{main-graded}. Finally in section eight we prove Theorem \ref{complete}.

\section{Preliminaries}

In this section we discuss a few preliminaries that we need. Practically all the results in this section are known. However we provide a few proofs as we do not have a convenient reference.

\s(Convention:) If $T$ is a commutative ring and $Q$ is an ideal in $T$, then we say $Q$ is a Gorenstein ideal if $T/Q$ is a Gorenstein ring. \emph{Note we do not 
require that projective dimension of $T/Q$ is finite.}

\s Let $S= \bigoplus_{n \geq 0} S_n$ be a graded ring (not necessarily commutative). Let ${}^*\Mod(S)$ denote the category of all left graded $S$-modules. Let  $M, N$ be graded left $S$-module. 
 For $d \in \Z$, set 
 \[
 {}^*\Hom_d(M, N)= \{f: M \rt N \mid f \text{ is } S\text{-linear and }f(M_i) \subseteq N_{i+d} \ \text{ for all } i \in \Z\}.
 \] 
 Set ${}^*\Hom_S(M, N)= \bigoplus_{i \in \Z} \Hom_d(M, N)$.\\
  In general,
 ${}^*\Hom_S(M, N) \neq \Hom_S(M, N)$ but equality holds if $M$ is finitely presented.

\s\label{tony} Let $R= \bigoplus_{i \geq 0} R_i$ be a standard graded $k=R_0$ algebra. Let
$\theta: G \rt \Aut(R)$ be a group homomorphism where $G$ is a finite group such that 
\begin{enumerate}[\rm (a)]
\item $|G|^{-1} \in k$.
\item For all $g \in G$ we have $gR_i \subseteq R_i$ for all $i \geq 0$.
\item $g(c)=c$ for all $c \in R_0=k$.
\end{enumerate}
Let $R^G$ be the ring of invariant's of $G$. Note we have the {\it Reynolds operator}
\begin{align*}
&\rho: R \rt R^G\\
&r \rt \frac{1}{|G|} \sum_{\sigma \in G} \sigma(r).
\end{align*}

\s Set $R*G$ be the skew-group ring. Recall that \[R*G= \{\sum_{\sigma} a_{\sigma} \sigma ~|~ a_\sigma \in R \text{ for all } \sigma \}.\] with multiplication defined as \[(a_{\sigma} \sigma)(a_{\tau} \tau)= a_\sigma \sigma(a_{\tau})\sigma \tau.\] 

\noindent
(a) Note $R*G$ is graded by defining for a homogeneous $r \in R$, $\deg(r \sigma)= \deg r$. 

\noindent
(b) An graded left $R*G$-module $M = \bigoplus_{i \in \Z}M_n$ is precisely a graded left $R$-module on which $G$ acts such that for all $\sigma \in G$ we have 
\begin{enumerate}[\rm (i)]
\item $\sigma(M_i) \subset M_i$ for all $i \in \Z$. 
\item $\sigma(am)= \sigma(a)\sigma(m)$ for all $a \in A$ and $m \in M$. 
\end{enumerate}

\s Let $M$ be a graded left $R*G$-module. Set $M^G= \{m \in M~|~ \sigma(m)=m \text{ for all } \sigma \in G\}$. Clearly $M^G$ is a graded left $R^G$-module. It can also be easily checked that if $u: M \to N$ is $R*G$-linear then $u(M^G) \subseteq N^G$ and the restriction map $\tilde{u}: M^G \rt N^G$ is $R^G$-linear.

\s Let $M, N$ be graded left $R*G$-modules. Then ${}^*\Hom_R(M, N)$ has a natural structure of a $R*G$-module defined as follows

Let $f \in {}^*\Hom_R(M, N)_c$ and $\sigma \in G$. Set $\sigma f: M \to N$ by $(\sigma f)(m)= \sigma \left(f(\sigma^{-1}m)\right)$. Notice that $\sigma f \in {}^*\Hom_R(M, N)_c$. 

Clearly ${}^*\Hom_R(M, N)^G \cong {}^*\Hom_{R*G}(M, N)$.

\begin{lemma}\label{P5}
Let $M, N$ be graded left $R*G$-modules and $\theta: M \rt N$ be $R*G$-linear ($\theta$ is homogeneous). Let $E$ be a graded left $R*G$-module. Then 
\begin{enumerate}[\rm 1)]
\item ${}^*\Hom_R(E, M) \xrightarrow{\Hom(E, \theta)} {}^*\Hom_R(E, N)$ is $R*G$-linear.
\item ${}^*\Hom_R(N, E) \xrightarrow{\Hom(\theta, E)} {}^*\Hom_R(M, E)$ is $R*G$-linear.
\end{enumerate}
\end{lemma}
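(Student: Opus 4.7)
The plan is a direct verification from the definitions. Both maps $\Hom(E,\theta)$ and $\Hom(\theta,E)$ are already known to be $R$-linear by the standard functoriality of ${}^*\Hom_R(-,-)$ in each variable (and since $R$ is commutative, the $R$-action $(r\cdot f)(x)=rf(x)=f(rx)$ is preserved under pre- and post-composition by $R$-linear maps). So the content of the lemma is just that these maps also commute with the $G$-action defined in the paragraph preceding the lemma, namely $(\sigma f)(x)=\sigma(f(\sigma^{-1}x))$. The single key input is that $\theta$, being $R*G$-linear, is both $R$-linear and $G$-equivariant, i.e.\ $\theta(\sigma m)=\sigma\theta(m)$ for every $\sigma\in G$ and $m\in M$.

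For part (1), I would fix $f\in{}^*\Hom_R(E,M)_c$ and $\sigma\in G$ and simply compute both sides at an arbitrary $e\in E$. On one hand,
\[
\bigl(\Hom(E,\theta)(\sigma f)\bigr)(e)=\theta\bigl((\sigma f)(e)\bigr)=\theta\bigl(\sigma(f(\sigma^{-1}e))\bigr)=\sigma\bigl(\theta(f(\sigma^{-1}e))\bigr),
\]
using $G$-equivariance of $\theta$ in the last step. On the other hand,
\[
\bigl(\sigma\cdot\Hom(E,\theta)(f)\bigr)(e)=\sigma\bigl((\theta\circ f)(\sigma^{-1}e)\bigr)=\sigma\bigl(\theta(f(\sigma^{-1}e))\bigr).
\]
These agree, so $\Hom(E,\theta)$ commutes with $\sigma$, which together with $R$-linearity gives $R*G$-linearity.

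For part (2), I would do the symmetric calculation. Fix $g\in{}^*\Hom_R(N,E)_c$ and $\sigma\in G$; then for any $m\in M$,
\[
\bigl(\Hom(\theta,E)(\sigma g)\bigr)(m)=(\sigma g)(\theta(m))=\sigma\bigl(g(\sigma^{-1}\theta(m))\bigr)=\sigma\bigl(g(\theta(\sigma^{-1}m))\bigr),
\]
where the last equality again uses that $\theta$ is $G$-equivariant. And
\[
\bigl(\sigma\cdot\Hom(\theta,E)(g)\bigr)(m)=\sigma\bigl((g\circ\theta)(\sigma^{-1}m)\bigr)=\sigma\bigl(g(\theta(\sigma^{-1}m))\bigr),
\]
so again the two sides coincide. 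There is no real obstacle here; the proof is just careful bookkeeping with the definition of $\sigma f$, and the only nontrivial input is the $G$-equivariance of $\theta$, which is exactly the part of $R*G$-linearity that goes beyond $R$-linearity.
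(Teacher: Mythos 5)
Your proof is correct and follows essentially the same route as the paper: a direct check that $\Hom(E,\theta)$ and $\Hom(\theta,E)$ commute with the action $(\sigma f)(x)=\sigma(f(\sigma^{-1}x))$, with the $G$-equivariance of $\theta$ as the only nontrivial input. The paper writes out only part (2) and declares part (1) similar, whereas you spell out both, but the computations are identical.
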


\begin{proof}
We prove (2). Proof of (1) is similar.

Set $\psi= \Hom(\theta, E)$. We want to show that $\psi(\sigma f)= \sigma \psi(f)$ for any $\sigma \in G$. 

Now for any $m \in M$ we have $[\psi(\sigma f)](m)= [(\sigma f) \circ \theta](m)= (\sigma f)\left(\theta(m)\right)= \sigma f \left(\sigma^{-1} \theta(m)\right)$.

 Again for any $m \in M$ we have $[\sigma \psi(f)](m)= [\sigma(f \circ \theta)](m)= \sigma [f \circ \theta (\sigma^{-1}m)]= \sigma [f\left(\theta(\sigma^{-1}m)\right)]= \sigma f \left(\sigma^{-1}\theta(m)\right)$ as $\theta$ is $R*G$-linear. Thus $\psi$ is $R*G$-linear.
\end{proof}

\begin{lemma}\label{P6}
\begin{enumerate}[\rm (i)]
\item 
$(R*G)^G= \{ \sum_{\sigma \in G} \sigma(r)\sigma~:~ r \in R \}$.
\item
$(R*G)^G$ has a natural structure of a right $R$-module.
\item 
The map $\psi: R \rt (R*G)^G$ defined by $\psi(r)= \sum_{\sigma \in G} \sigma(r) \sigma$ is a  isomorphism as left $R^G$-module and right $R$-modules.
\end{enumerate}
\end{lemma}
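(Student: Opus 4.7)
My plan is to start by unwinding how $G$ acts on $R*G$ when viewed as a left module over itself: for $\sigma \in G$ and $x = \sum_{\tau} a_\tau \tau$, one has $\sigma \cdot x = (1 \cdot \sigma) x = \sum_{\tau} \sigma(a_\tau)\,\sigma\tau$. To prove (i), I will impose $\sigma \cdot x = x$ and reindex the left-hand side via $\tau' = \sigma\tau$; matching coefficients yields the relation $a_{\tau'} = \sigma\bigl(a_{\sigma^{-1}\tau'}\bigr)$ for all $\sigma, \tau' \in G$, and specializing $\tau' = \sigma$ gives $a_\sigma = \sigma(a_e)$. Setting $r := a_e$ exhibits the claimed form, and the converse inclusion is a direct check using $\sigma\bigl(\tau(r)\bigr) = (\sigma\tau)(r)$.

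For (ii), the key observation is that right multiplication on $R*G$ by $s \in R$ (viewed as $se \in R*G$) commutes with the left $G$-action by associativity in $R*G$: if $x$ is $G$-fixed then $\sigma \cdot (x \cdot s) = (\sigma \cdot x) \cdot s = x \cdot s$. Hence $(R*G)^G$ inherits a right $R$-action from $R*G$, and I can compute it explicitly: $\bigl(\sum_\sigma \sigma(r)\sigma\bigr) \cdot s = \sum_\sigma \sigma(r)\sigma(s)\sigma = \sum_\sigma \sigma(rs)\sigma$, which stays inside the form described by (i). Note that \emph{left} multiplication by a general $r \in R$ does not preserve $(R*G)^G$ (since $\sigma(rx) = \sigma(r)x$), but restricted to $c \in R^G$ it does, which gives the left $R^G$-module structure used in (iii).

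For (iii), surjectivity of $\psi$ is precisely (i), and injectivity is immediate since $r$ appears as the coefficient of the identity in $\psi(r)$. Right $R$-linearity, $\psi(r) \cdot s = \psi(rs)$, is the computation from the previous paragraph; left $R^G$-linearity is $c \psi(r) = \sum_\sigma c\,\sigma(r)\,\sigma = \sum_\sigma \sigma(cr)\,\sigma = \psi(cr)$, using $\sigma(c) = c$ for $c \in R^G$.

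No step here will be a real obstacle: the whole lemma is an unwinding of the skew-group ring multiplication. The only subtlety I will need to keep straight is the interplay between the three actions (left $G$, left $R$, right $R$), and in particular that right multiplication by $R$ always commutes with left multiplication by $G$, whereas left multiplication by $R$ only commutes with left $G$ on the invariant subring $R^G$.
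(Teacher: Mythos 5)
Your proposal is correct and follows essentially the same route as the paper: identifying the $G$-fixed elements by the coefficient relation $a_{\tau\sigma}=\tau(a_\sigma)$ specialized at the identity, defining the right $R$-action by the formula $\bigl(\sum_\sigma \sigma(r)\sigma\bigr)s=\sum_\sigma \sigma(rs)\sigma$, and reading off bijectivity and the two linearities of $\psi$ from these computations. Your added observations (the explicit converse check in (i), and the remark that right multiplication by $R$ commutes with the left $G$-action while left multiplication only does so for $R^G$) are correct and slightly more careful than the paper's write-up, but they do not change the argument.
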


\begin{proof}
(i) Let $u = \sum_{\sigma} a_{\sigma} \sigma \in (R*G)^G$. Then $\tau(u)= \sum_{\sigma} \tau(a_{\sigma})\tau \sigma$. Now $a_{\tau\sigma}= \tau(a_\sigma)$. If $\sigma=1$ then $a_{\tau}=\tau(a_{\bf 1})=\tau(r)$ where $r=a_{\bf 1} \in R$. Thus $u= \sum_{\tau\in G} \tau(r)\tau$.

(ii) The right $R$-action on $(R*G)^G$ is defined as follows. 
Let $u = \sum_{\sigma \in G} \sigma(r)\sigma \in (R*G)^G$ and $s \in R$. Then set 
\begin{align*}
us &= (\sum_{\sigma \in G} \sigma(r)\sigma)s \\
&= \sum_{\sigma \in G} \sigma(r)\sigma(s)\sigma \\
&=\sum_{\sigma \in G} \sigma(rs)\sigma
\end{align*}

(iii) Clearly $\psi$ is surjective by (i). Also if $\sigma(r)=0$ for all $\sigma \in G$ then ${\bf 1}(r)=0$ and hence $r=0$. So $\psi$ is one-one. Let $a \in R^G$. Then $\psi(ar)= \sum_{\sigma}\sigma(ar) \sigma=a \psi(r)$. So $\psi$ is $R^G$-linear as left $R^G$-modules 
 
 Also note that $\psi(rs)= \sum_{\sigma \in G} \sigma(rs)\sigma  = \psi(r)s$
Thus $\psi$ is $R$-linear. So $R \cong (R*G)^G$ as right $R$-modules and left $R^G$-modules. 
\end{proof}

\begin{remark}
By Lemma \ref{P6} it follows that we have an isomorphism of left $R$-modules \[{}^*\Hom_{R^G}((R*G)^G, R^G) \cong {}^*\Hom_{R^G}(R, R^G).\]
\end{remark}

\begin{lemma}
The natural map $\eta: \Hom_{R*G}(R*G, R) \rt \Hom_{R^G}((R*G)^G, R^G)$ is $R$-linear.
\end{lemma}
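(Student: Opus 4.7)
The plan is to first pin down the two $R$-module structures that are implicit in the statement, then check $R$-linearity by a one-line computation.

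On the source side, $R*G$ is a left $R*G$-module, and it also carries a right $R$-action (coming from the inclusion $R \hookrightarrow R*G$ and right multiplication in the skew group ring), which commutes with left multiplication by $R*G$. This gives $\Hom_{R*G}(R*G, R)$ a left $R$-module structure by setting $(r \cdot f)(x) := f(xr)$. On the target side, $(R*G)^G$ carries a right $R$-action by Lemma \ref{P6}(ii), and one checks this commutes with the left $R^G$-action on $(R*G)^G$ (both come from multiplication in $R*G$, so associativity suffices). Hence $\Hom_{R^G}((R*G)^G, R^G)$ acquires a left $R$-module structure via $(r \cdot \phi)(u) := \phi(ur)$, and $r \cdot \phi$ remains $R^G$-linear since $(r\cdot\phi)(au) = \phi(aur) = a\phi(ur) = a(r\cdot\phi)(u)$ for $a \in R^G$.

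Next I would spell out what the natural map $\eta$ is, namely restriction to $G$-invariants: $\eta(f) := f|_{(R*G)^G}$. Since $f$ is $R*G$-linear and $R$ is a left $R*G$-module, the general observation preceding Lemma \ref{P5} applies, and $f$ sends $(R*G)^G$ into $R^G$ with the restriction being $R^G$-linear. So $\eta(f) \in \Hom_{R^G}((R*G)^G, R^G)$ as required.

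Finally, to verify $R$-linearity, fix $f \in \Hom_{R*G}(R*G, R)$, $r \in R$, and $u \in (R*G)^G$. By the two definitions above,
\[
\eta(r \cdot f)(u) \;=\; (r\cdot f)(u) \;=\; f(ur) \;=\; \eta(f)(ur) \;=\; \bigl(r \cdot \eta(f)\bigr)(u),
\]
so $\eta(rf) = r\eta(f)$. There is essentially no obstacle: the only subtle point is ensuring that the right $R$-actions on $R*G$ and on $(R*G)^G$ are used consistently so that the two definitions of the left $R$-action collapse to the common expression $f(ur)$.
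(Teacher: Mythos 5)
Your proof is correct and follows essentially the same route as the paper: $\eta$ is restriction to $G$-invariants, and $R$-linearity is verified by the one-line computation $\eta(rf)(u)=(rf)(u)=f(ur)=(r\eta(f))(u)$, using the right $R$-multiplication on $R*G$ and on $(R*G)^G$ (which, as you note, agree via Lemma \ref{P6}(ii)). If anything you are more careful than the paper: its displayed formula $(ru)(\xi)=u(\xi)r$ for the source action does not, read literally, produce an $R*G$-linear map (right multiplication on the target $R$ fails to commute with the $G$-action), whereas your choice $(rf)(x)=f(xr)$ is genuinely $R*G$-linear and is the structure actually needed for the map $\Phi(r)=f_r$ in Lemma \ref{iso} to be $R$-linear.
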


\begin{proof}
Let $u \in \Hom_{R*G}(R*G, R)$. Note that $ru:R*G \rt R$ is defined by $(ru)(\xi)=u(\xi)r=\xi u(1)r$. 

Also if $\theta \in \Hom_{R^G}((R*G)^G, R^G)$ then $r\theta $ is defined as $(r\theta)(\xi)=\theta(\xi r)$.

 We want to show that $\eta(ru)=r\eta(u)$. Let $\xi \in (R*G)^G$. Then $((\eta(ru)))(\xi)= (ru)(\xi)=\xi u(1)r$. 
 Again $(r \eta(u))(\xi) = \eta(u)(\xi r)=u(\xi r)=\xi r u(1)= \xi u(1)r$ (as $R$ is commutative). Thus $\eta(ru)=r\eta(u)$ and hence $\eta$ is $R$-linear. 
\end{proof}

\s Let $T = \bigoplus_{n \geq 0}T_n$ be a commutative  Noetherian graded $k = S_0$-algebra (not-necessarily standard graded). Let $a(T)$ be the a-invariant of $T$, see \cite[3.6.13]{BH}.

\section{Construction of a G-invariant Gorenstein Ideals}
Let $A=k[X_1, \ldots, X_d]=\bigoplus_{n \geq 0}A_n$ (standard grading). 
Let  $G$ be a finite subgroup of $Gl_n(k)$. Let $G$ act linearly on $A$.
In this section we adapt a construction from \cite[3.2.11]{BH} to show that there are plenty of $G$-invariant ideals $Q$ in $A$ such that $A/Q$ is a Gorenstein ring.

\s  We first recall the construction from \cite[3.2.11]{BH}. 

Let $\phi: A_m \to k$ be a non-trivial linear map. Set $I_0=0$; $I_j=A_j$ for all $j \geq m+1$; $I_m = \ker \phi$ and 
\[I_j =\{a \in A_j ~|~ \phi(a A_{m-j})=0\} \quad \text{for }1 \leq j \leq m-1.\] Then $I = I(\phi) = \bigoplus_{j \geq 0} I_j$ is a proper homogeneous ideal in $A$ such that $A/I$ is Gorenstein.

\s {\bf G-invariant construction:} Suppose $k^{\#}= k $  as vector space and $k^{\#}$ is a $G$-module. Suppose $\phi: A_m \to k^{\#}$ is a non-trivial $G$-invariant map.

\noindent
{\it Claim:} $I= I(\phi)$ is a $G$-invariant ideal.

\noindent
{\it Proof.} Let $a \in I_j$ and $\sigma \in G$. We want to show that $\sigma(a) \in I_j$. If $j \geq m+1$ then $I_j=A_j$. So nothing to show. For $j=m$ we have $I_m= \ker \phi$ which is clearly $G$-invariant as $\phi$ is $G$-linear. Since $I_0=0$, so nothing to show. Now let $1 \leq j \leq m-1$. We want to show $\phi(\sigma(a) A_{m-j})=0$. Now $\sigma: A_{m-j} \to A_{m-j}$ is an isomorphism. Let $\eta \in A_{m-j}$. Then $\eta = \sigma(t)$ for some $t \in A_{m-j}$. Now $\phi(\sigma(a)\eta)= \phi(\sigma(a)\sigma(t))= \phi(\sigma(at))= \sigma(\phi(at))=\sigma(0)=0$. Thus $\phi(\sigma(a)A_{m-j})=0$ implies $\sigma(a) \in I_j$. Thus $I$ is a  $G$-invariant ideal.

By \cite[3.2.11]{BH} we also get $A/I$ is a Gorenstein ideal.

\s {\bf Specific examples:} Let $A^G$ be the ring of invariant's of $G$.  Suppose $A_m^G \neq 0$ and let $\eta: A_m^G \to k$ be any non-trivial map. Give $k$ the  trivial $G$-action. Consider the following diagram.
\[
\xymatrix{
 A_m  \ar[rd]_{\phi} \ar@{->>}[r]^{\rho} & A_m^G \ar[d]^{\eta} \\
  &k }
\]
Clearly $\phi=\eta \circ \rho$ is non-trivial. Now $\phi(\sigma a)= \eta(\rho(\sigma(a)))=\eta(\rho(a))= \phi(a)$. Note $\sigma\phi(a)=\phi(a)$ ( as the action of $G$ on $k$ is trivial).

The following is an example of $G$-invariant ideal $Q$ of $A$ such that $Q^G =Q \cap A^G$ is not a Gorenstein ideal.
\begin{example}\label{ex}
Take $A=k[X, Y]$ and $G= \{ 1, \sigma \}$ with $\chars k \neq 2$. Let $\sigma X =-X$ and $\sigma Y= -Y$. Then it is easy to check that $A^G=k[X^2, XY, Y^2]$. Clearly $A^G$ is Gorenstein. Now $k$ has two distinct structure as $G$-modules. First is trivial. Second is induced by the map $\eta: G \to k^*$ where $\eta(\sigma)=-1$.  Let $ {}^{\#}k$ be $k$ as a vector space with $G$-action defined by $\eta$.  Consider the linear map $\alpha: A_3 \to {}^{\#}k$ where $\alpha(X^3)=1$, $\alpha(X^2Y)=1$, $\alpha(XY^2)=0$ and $\alpha(Y^3)=0$. Clearly $\alpha$ is $G$-invariant (as $\sigma(X^iY^j)= -X^iY^j$ if $i+j$ is odd). Now $I_0 =0$, $I_j= A_j$ for all $j \geq 4$ and $I_3= (X_3-X^2Y, XY^2, Y^3)$.

\noindent
{\it Claim }1. $I_1=0$.

\noindent
{\it Proof.} Since $\phi(X \cdot X^2) \neq 0$ so $X \notin I_1$. Again 
$\phi(Y \cdot X^2) \neq 0$ so $Y \notin I_1$. If $aX+bY \in I_1$ where $a, b \in k^*$, then $0= \phi((aX+bY)X^2)= a \phi(X^3)+b \phi(X^2Y)= a+b$. So $b= -a$. But $\phi((X-Y)\cdot XY)= \phi(X^2Y-XY^2)= \phi(X^2Y)-\phi(XY^2)= 1-0 \neq 0$. So $X-Y \notin I_1$. Thus $I_1=0$. 

\noindent
{\it Claim }2. $I_2=k Y^2$.

\noindent
{\it Proof.} Clearly $Y^2 \in I_2$. Also clearly $X^2, XY \notin I_2$. If $u= aX^2+bXY+cY^2 \in I^2$ with $a, b \in k^*$ and $c \in k$, then $0=\phi(u\cdot X)= a \phi(X^3)+b \phi(X^2Y)= a+b$. So $b=-a$. Take $u= X^2-XY+cY^2$. Then $\phi(uY)= \phi(X^2Y)-\phi(XY^2)+c\phi(Y^3)=1 \neq 0$. So $u \notin I_2$. Hence $I_2=kY^2$. 

Hilbert series of $A/Q$ is $1+2z+2z^2+z^3$. Now $A^G/Q^G= k \oplus 0 \oplus (kX^2 + kYX)$. Thus $(A^G/Q^G)_0= k$, $(A^G/Q^G)_1=0$ and $(A^G/Q^G)_2= kX^2 + kYX$. Clearly $A^G/Q^G$ is not Gorenstein.
\end{example}

The following example shows that even if $Q^G$ is a Gorenstein ideal in $A^G$ it may happen that $a(A^G/Q^G)< a(A/Q)$. 
\begin{example}
Take $A=k[X, Y]$ and $G= \{ 1, \sigma \}$ with $\chars k \neq 2$. Let $\sigma X =-X$ and $\sigma Y= -Y$. Then it is easy to check that $A^G=k[X^2, XY, Y^2]$. Define $\phi: A^3 \to k^{\#}$ by $\phi(X^3)=1$, $\phi(X^2Y)=\phi(XY^2)=\phi(Y^3)=0$. Then it can be checked that $I_0=0$, $I_1=(Y)$, $I_2= ( XY, Y^2)$, $I^3=( X^2Y, XY^2, Y^3)$. Clearly $A/I= k \oplus kX \oplus kX^2 \oplus k X^3$. Now $A^G/I^G= k \oplus k X^2$ is Gorenstein. Note that $a(A^G/I^G)=2 < 3 = a(A/I)$.
\end{example}

\section{Examples of Groups and Fields Satisfying our Condition}
In this section we give examples of finite groups $G$ and fields $k$ such that $G$ does not have no non-trivial one dimensional representations over $k$. Throughout we assume that $|G|^{-1} \in k$. The results of this section are certainly  known
to workers in representation theory of groups. However we state it here to show the abundance of groups and fields satisfying our condition.

We begin by an easy result
\begin{lemma}
\TFAE
\begin{enumerate}[\rm 1)]
\item There does not exist any one-dimensional representation of $G$ over $k$.
\item If $\eta: G \to k^*$ is a group homomorphism, then $\eta(g)=1$ for all $g \in G$.
\item Either 
\begin{enumerate}[\rm (a)]
\item $G= [G;G]$,
\item If $r=|G| /[G;G]| \geq 2$ and $p|r$ is a prime then there does not exist $p^{th}$ primitive root of unity in $k$.
\end{enumerate}
\end{enumerate}
\end{lemma}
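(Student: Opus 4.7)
The plan is to prove the cycle $(1) \Leftrightarrow (2) \Leftrightarrow (3)$, with the equivalence of $(1)$ and $(2)$ being essentially tautological, and the content concentrated in $(2) \Leftrightarrow (3)$.

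For $(1) \Leftrightarrow (2)$, I would simply observe that a one-dimensional representation of $G$ over $k$ is by definition a group homomorphism $G \to GL_1(k) = k^*$, so asking for no nontrivial such representation is the same as asking every homomorphism $\eta\colon G\to k^*$ to be trivial. (I am reading the lemma in the sense of the introduction, where ``no one-dimensional representation'' means ``no nontrivial one-dimensional representation'', since the trivial one always exists.)

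For $(2) \Rightarrow (3)$, suppose $(2)$ holds and $G\neq [G,G]$, so that $r = |G/[G,G]| \geq 2$. Let $p \mid r$ be a prime and assume for contradiction that $k$ contains a primitive $p$-th root of unity $\omega$. Since $p$ divides the order of the finite abelian group $G^{\mathrm{ab}} := G/[G,G]$, Cauchy's theorem produces a surjection $G^{\mathrm{ab}} \twoheadrightarrow C_p$ onto a cyclic group of order $p$ (e.g.\ using the invariant factor decomposition of $G^{\mathrm{ab}}$). Composing the quotient map $G \twoheadrightarrow G^{\mathrm{ab}} \twoheadrightarrow C_p$ with the injection $C_p \hookrightarrow k^*$ sending a generator to $\omega$ yields a nontrivial homomorphism $G\to k^*$, contradicting $(2)$.

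For $(3) \Rightarrow (2)$, let $\eta\colon G\to k^*$ be any homomorphism. Because $k^*$ is abelian, $\eta$ factors through $G^{\mathrm{ab}}$, so its image is a finite subgroup of $k^*$; every finite subgroup of the multiplicative group of a field is cyclic, say of order $d$. As $\eta$ factors through $G^{\mathrm{ab}}$ and this group has order $r$, we get $d \mid r$. If $d \geq 2$, pick a prime $p\mid d$; then $p\mid r$ and the image of $\eta$ contains an element of order $p$, i.e.\ a primitive $p$-th root of unity in $k$. This contradicts clause (b) of $(3)$ (which in particular forces $r\geq 2$, so case (a) is excluded). Hence $d=1$ and $\eta$ is trivial. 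The main (minor) obstacle is just the clean invocation of ``finite subgroups of $k^*$ are cyclic'' together with Cauchy's theorem for finite abelian groups; everything else is formal.
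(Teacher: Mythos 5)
Your proposal is correct and follows essentially the same route as the paper: both directions of $(2)\Leftrightarrow(3)$ are handled by passing to the abelianization, producing a surjection onto $\mathbb{Z}/p\mathbb{Z}$ and embedding it into $k^*$ via a primitive $p$-th root of unity, and conversely noting that a nontrivial homomorphism to $k^*$ has nontrivial (cyclic) image and hence yields such a root. The only difference is that you spell out the converse direction (via cyclicity of finite subgroups of $k^*$), which the paper leaves as ``easy to show.''
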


\begin{proof}
1) and 2)are clearly equivalent. 

3)b) Suppose $p \mid r$ and there exists a $p^{th}$ root of unity in $k$. Then $G/[G, G]= \Z/p^r\Z \oplus H$ where $H$ is a finite abelian group. Now we have $\Z/p^r\Z \twoheadrightarrow \Z/p\Z \hookrightarrow k^*$. Let $\eta$ be the composite map
\[
\xymatrix{
&G \ar@{->>}[r]& G/[G, G] \ar@{->>}[d] \\
 & &\Z/p^r\Z \ar@{->>}[r] &\Z/p\Z \ar@{^{(}->}[r]& k^* }
\]
So $\eta: G \to k^*$ is non-trivial.

Conversely, if $\eta: G \to k^*$ is non-trivial, then $\eta: G/[G, G] \to k^*$ is also non-trivial. From here it is easy to show that there exists $p \mid |G/[G, G]|$ such that there exists a primitive $p^{th}$ root of unity in $k$.
\end{proof}
In this table we collect examples of Groups and fields satisfying our condition.
\emph{Throughout we assume $G$ is not the trivial group and $p,q$ are prime numbers. Also $\mathbb{Q}_q$ denotes the field of $q$-adic numbers}

\begin{center}
\begin{tabular}{ |p{4.5cm}|p{7.5cm}| }
\hline
{\bf $k$} & { \bf Sufficient condition for $G$ not to have one-dimensional representation over $k$}\\
\hline\hline
$k=\mathbb{C}$ & $G=[G:G]$\\
\hline
$k=\mathbb{R}$ &  $G=[G;G]$ or $|G|$ odd\\
\hline
$k=\Z/2\Z$ & $G=[G,G]$ or $|G|$ is odd \\
\hline
$\chars k =p>0$ and $k = \ov{k}$
 & $G=[G:G]$\\
\hline
$k=\Z/q\Z$  & $G=[G:G]$ or if  $p\mid |G|$ then $p \nmid q-1$\\
\hline
$k=$  & $G=[G:G]$ or if  $p\mid |G|$ then $p \nmid q-1$ \\
\hline
$ K$ is a finite extension of $\mathbb{Q}$
 & If  $p \mid |G|$ then  $p > \dim_\Q K +1$. \\
\hline
\end{tabular}
\end{center}

\section{Some Consequences for Ring of Invariant's of Groups having no non-trivial one-dimensional Representation}
\s \label{hypoth-conseq}In this section we assume that $G$ has no non-trivial one-dimensional representation over $k$ and examine it's consequences for ring of invariant's. 
Throughout we assume that the setup is as in \ref{tony}.  We will also assume that both $R$ and $R^G$ are Gorenstein.

\begin{lemma}\label{ginvar-gen}
Let $M= \bigoplus_{n \geq 0}M_n$ be a graded $R*G$-module such that $M \cong R/J$ as $R$-modules. Then if $u \in M_0$ is a generator of $M$ then $u\in M^G$. So $J$ is $G$-invariant and $M \cong R/J$ as $R*G$-modules. 
\end{lemma}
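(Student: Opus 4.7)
My plan is to exploit the fact that a generator $u$ of a cyclic graded $R$-module sitting in degree zero must span a one-dimensional subspace, which the group acts on as a one-dimensional $k$-representation; the hypothesis on $G$ then forces this action to be trivial, after which everything else falls out.

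First I would observe that since $R$ is standard graded and $M$ is generated by $u \in M_0$, we have $M_0 = R_0 \cdot u = k u$, a one-dimensional $k$-vector space (it is nonzero, for otherwise $u$ generates the zero module). Because $M$ is a graded $R*G$-module, each $\sigma \in G$ carries $M_0$ to itself and acts $k$-linearly (since $\sigma$ fixes $R_0 = k$). Thus $M_0$ carries the structure of a one-dimensional representation of $G$ over $k$, which by the standing hypothesis must be trivial. Consequently $\sigma(u) = u$ for every $\sigma \in G$, i.e.\ $u \in M^G$.

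Next, let $J = \ann_R(u)$, which is the ideal such that the $R$-linear map $R \to M$, $r \mapsto ru$, induces the given isomorphism $R/J \cong M$. For any $a \in J$ and $\sigma \in G$, I compute
\[
\sigma(a) \cdot u = \sigma(a) \cdot \sigma(u) = \sigma(au) = \sigma(0) = 0,
\]
so $\sigma(a) \in J$; hence $J$ is $G$-invariant. This makes $R/J$ itself into a graded $R*G$-module in a canonical way.

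Finally, I verify that the $R$-linear isomorphism $\phi \colon R/J \to M$ sending $\bar r \mapsto ru$ is in fact $R*G$-linear. For $\sigma \in G$ and $r \in R$, using $\sigma(u) = u$ one more time,
\[
\phi(\sigma \bar r) = \phi(\overline{\sigma(r)}) = \sigma(r)\,u = \sigma(r)\,\sigma(u) = \sigma(ru) = \sigma\,\phi(\bar r),
\]
so $\phi$ commutes with the $G$-action and therefore is an isomorphism of $R*G$-modules. The only real ingredient is step one (forcing $\sigma(u) = u$), which is the only place the hypothesis on one-dimensional representations enters; the rest is routine ideal-theoretic bookkeeping.
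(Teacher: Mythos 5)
Your proof is correct and follows essentially the same route as the paper: both arguments reduce to the observation that $M_0 = ku$ is a one-dimensional $G$-representation (the paper phrases this by explicitly checking that $\sigma \mapsto a_\sigma$, where $\sigma(u) = a_\sigma u$, is a homomorphism $G \to k^*$), which the standing hypothesis forces to be trivial, after which the $G$-invariance of $J = \ann_R(u)$ and the $R*G$-linearity of $r \mapsto ru$ are routine.
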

\begin{proof}
For $\sigma \in G$ we know that $\sigma(u)$ is a also a generator of $M$. Now $\sigma(u)= a_{\sigma}u$ where $a_\sigma \in k^*$. Let $\tau \in G$. Then $(\tau \sigma)(u)= \tau(a_\sigma)\tau(u)=a_\sigma a_\tau u$ (as $\tau(a_\sigma)=a_\sigma$). But $(\tau\sigma)(u)=a_{\tau \sigma}u$. Thus $a_{\tau \sigma}=a_\sigma a_\tau= a_\tau a_\sigma$. So we have a group homomorphism $\eta: G \to k^*$ where $\eta(\sigma)= a_\sigma$. But by our assumption $\eta \equiv 1$. So $a_\sigma =1$ for all $\sigma \in G$ and hence $\sigma(u)=u$ for all $\sigma \in G$.  Therefore the map $\phi: R \to M$ defined by $\phi(r)= ru$ is $G$-invariant. So $\ker \phi=I$ is $G$-invariant. Thus $M \cong R/I$ as $R*G$-modules.
\end{proof}

\begin{lemma}\label{ainv}(with hypotheses as in \ref{hypoth-conseq})
$a(R) = a(R^G)$.
\end{lemma}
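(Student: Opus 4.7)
The plan is to upgrade the Gorenstein identification $\omega_R \cong R(a(R))$ to an isomorphism of graded $R*G$-modules, pass to $G$-invariants, and match the result with the Gorenstein identification $\omega_{R^G} \cong R^G(a(R^G))$. The hypothesis that $G$ has no non-trivial one-dimensional representation over $k$ will enter through Lemma~\ref{ginvar-gen}, which forces a graded $R*G$-module that is cyclic as an $R$-module to have a $G$-invariant cyclic generator.

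First I would equip $\omega_R$ with the natural graded $R*G$-structure induced from the $G$-action on $R$. Concretely, choose a Noether normalization $S \hookrightarrow R^G$ on which $G$ acts trivially; then $R$ is graded-free over $S$ and $\omega_R \cong {}^*\Hom_S(R,\omega_S)$, with $\sigma \in G$ acting by $(\sigma f)(r) = f(\sigma^{-1}r)$ (cf.\ Lemma~\ref{P5}). Since $R$ is Gorenstein, $\omega_R \cong R(a(R))$ as graded $R$-modules, so $\omega_R$ is cyclic with homogeneous generator in degree $-a(R)$. Set $M := \omega_R(-a(R))$; then $M$ is a graded $R*G$-module concentrated in degrees $\ge 0$, isomorphic to $R$ as graded $R$-modules, with $\dim_k M_0 = 1$. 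Lemma~\ref{ginvar-gen} applied with $J = 0$ gives that a cyclic generator of $M_0$ is $G$-invariant, hence $M \cong R$ as graded $R*G$-modules. Unshifting yields $\omega_R \cong R(a(R))$ as graded $R*G$-modules.

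Next I would apply the $G$-invariant functor. Since $|G|^{-1} \in k$ the Reynolds operator is available and $(-)^G$ is exact; from $R = R^G \oplus \ker\rho$ as graded $S$-modules one checks directly that $(\omega_R)^G \cong \omega_{R^G}$ as graded $R^G$-modules (given $g \in {}^*\Hom_S(R^G,\omega_S)$, the extension $g \circ \rho$ is a $G$-invariant preimage, while restriction to $R^G$ is an inverse, using that $f = f \circ \rho$ whenever $f$ is $G$-invariant and $\omega_S$ is a trivial $G$-module). Combining with the previous step,
\[
\omega_{R^G} \;\cong\; (\omega_R)^G \;\cong\; R(a(R))^G \;=\; R^G(a(R))
\]
as graded $R^G$-modules. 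Since $R^G$ is Gorenstein, $\omega_{R^G} \cong R^G(a(R^G))$, so comparing bottom non-zero degrees forces $a(R) = a(R^G)$.

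The main obstacle is the second step: one must both confirm that the $R*G$-structure placed on $\omega_R$ via ${}^*\Hom_S(R,\omega_S)$ is the one intrinsic to $\omega_R$ (so that the outcome does not depend on the choice of $S$) and verify that this is the structure for which $(\omega_R)^G = \omega_{R^G}$. Once this is granted, the first step is a clean one-line application of Lemma~\ref{ginvar-gen}, and the conclusion follows by degree bookkeeping.
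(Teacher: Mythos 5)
Your argument is correct, but it takes a genuinely different route from the paper. The paper's proof first reduces to the Artinian case by factoring out a homogeneous system of parameters $\underline{\bf x}$ of $R^G$ (using $(R/\underline{\bf x}R)^G = R^G/\underline{\bf x}R^G$ and the behaviour of the $a$-invariant under such quotients), and then argues directly on socles: the socle of the Artinian Gorenstein ring $R$ is a one-dimensional $G$-stable subspace $ku$ in top degree $s=a(R)$, so $\sigma(u)=a_\sigma u$ defines a character $G\to k^*$, which is trivial by hypothesis; hence $u\in R^G$ and $a(R^G)=s=a(R)$. You instead stay in arbitrary dimension, realize $\omega_R$ as ${}^*\Hom_S(R,\omega_S)$ for a Noether normalization $S$ of $R^G$, invoke Lemma \ref{ginvar-gen} to upgrade $\omega_R\cong R(a(R))$ to an isomorphism of graded $R*G$-modules, and identify $(\omega_R)^G$ with $\omega_{R^G}$ via the Reynolds operator. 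Both proofs ultimately rest on the same mechanism --- a canonically attached one-dimensional $G$-stable subspace on which $G$ acts by a character that must be trivial (the socle generator for the paper, the cyclic generator of $\omega_R(-a(R))$ for you) --- but your version buys more: the equivariant statement $\omega_R\cong R(a(R))$ as $R*G$-modules together with $(\omega_R)^G\cong\omega_{R^G}$ essentially subsumes Corollary \ref{rho-gen} and the duality computation in the Artinian theorem, at the cost of importing standard canonical-module facts for finite free extensions. Also, the ``main obstacle'' you flag (independence of the $R*G$-structure on $\omega_R$ from the choice of $S$) is not actually needed: for the degree bookkeeping it suffices to have \emph{one} model of $\omega_R$ carrying a graded $R*G$-structure whose invariants are $\omega_{R^G}$, and your fixed $S$ provides exactly that.
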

\begin{proof}
Let $\underline{\bf x}= x_1, \ldots, x_d$ be a homogeneous system of parameters of $R^G$. Then $(R/\underline{\bf x}R)^G=R^G/\underline{\bf x}R^G$. It is suffices to show that $a(R/\underline{\bf x}R) = a(R^G/\underline{\bf x}R^G)$, see \cite[3.6.14]{BH}. So we can assume that $\dim R = \dim R^G=0$. 

Let $R=k \oplus R_1 \oplus R_2 \oplus \cdots \oplus R_s$ where $R_s = \soc(R) = ku \neq 0$. Then $B= R^G = k \oplus B_1 \oplus B_2 \oplus \cdots \oplus B_s$ where
 $B_i =R^G \cap R_i$. Thus it is suffices to show that $u \in R^G$.
 
 Let $\m$ be the maximal ideal of $R$. As $R$ is Gorenstein we have
  $R_s = \soc(R) (0: \m)$. Note for $\sigma \in G$ we have $\m \sigma(u)=  \sigma(\m)\sigma(u )=\sigma(\m u)=\sigma(0)=0$. So $\sigma(u)$ also generates $\soc(R) =R_s$. Thus $\sigma(u)= a_{\sigma}u$ where $a_\sigma \in k$. Let $\tau \in G$. Then $(\tau \sigma)(u)= \tau(a_\sigma)\tau(u)=a_\sigma a_\tau u$ (as $\tau(a_\sigma)=a_\sigma$). But $(\tau\sigma)(u)=a_{\tau \sigma}u$. Thus $a_{\tau \sigma}=a_\sigma a_\tau= a_\tau a_\sigma$. Now $\eta: G \to k^*$ defined by $\eta(\sigma)= a_\sigma$ is a group homomorphism. Now by our assumption $\eta \equiv 1$. So $a_\sigma =1$ for all $\sigma \in G$ and hence $\sigma(u)=u$ for all $\sigma \in G$. Therefore $u \in R^G$ and we are done.
\end{proof}
\begin{remark}
Lemma \ref{ainv} need not hold if $G$ has a non-trivial one dimensional representation over $k$; see Example 3.5.
\end{remark}
\begin{corollary}\label{rho-gen}(with hypotheses as in \ref{hypoth-conseq})
Let $\m$ be the maximal ideal of $R$.
Let $\rho: R\to R^G$ be the Reynolds operator. Then $\rho \notin \m {}^*\Hom_{R^G}(R, R^G)$. 
\end{corollary}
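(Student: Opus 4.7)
The plan is to identify $M := {}^*\Hom_{R^G}(R, R^G)$ with $R$ as a graded $R$-module and then observe that $\rho$, being a nonzero element in the one-dimensional degree-zero part, must be a free generator of $M$.

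First I would invoke Lemma~\ref{ainv} to obtain $a(R) = a(R^G) = a$. Since both $R$ and $R^G$ are graded Gorenstein rings, their canonical modules take the form $\omega_R \cong R(c)$ and $\omega_{R^G} \cong R^G(c)$ for a common shift $c$ determined by $a$. Next, because $G$ is finite, $R$ is module-finite over $R^G$, and $R$ is a graded maximal Cohen-Macaulay $R^G$-module of dimension $d = \dim R^G$: any homogeneous system of parameters of $R^G$ is also a regular sequence on $R$, since $R$ is Cohen-Macaulay over itself and integral over $R^G$. The standard formula for the canonical module of a maximal Cohen-Macaulay module then yields $\omega_R \cong {}^*\Hom_{R^G}(R, \omega_{R^G})$ as graded $R$-modules. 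Combining these identifications,
\[
M \;\cong\; {}^*\Hom_{R^G}(R, \omega_{R^G})(-c) \;\cong\; \omega_R(-c) \;\cong\; R(c)(-c) \;=\; R.
\]
The graded $R$-action transported through this chain is precisely $(r\phi)(s) = \phi(sr)$, which matches the $R$-action on $M$ coming via the right-$R$ structure on $(R*G)^G$ from Lemma~\ref{P6}.

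Since $M$ is therefore a graded free $R$-module of rank one with generator in degree $0$, its degree-zero part $M_0$ is one-dimensional over $k$. The Reynolds operator $\rho$ lies in $M_0$ (it is $R^G$-linear and degree-preserving) and is nonzero because $\rho(1) = 1$, hence $\rho$ spans $M_0$ and is a free generator of $M$ over $R$. Graded Nakayama then immediately yields $\rho \notin \m M$.

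The only delicate point is the bookkeeping of the graded $R$-module structures through the chain of identifications; this reduces to the commutativity of $R$ and the naturality of graded Hom. It is worth noting that the hypothesis on $G$ enters only through Lemma~\ref{ainv}, which forces the canonical shifts of $R$ and $R^G$ to agree. If the two $a$-invariants were to differ, $M$ would be isomorphic to $R$ only up to a nonzero degree shift, and $\rho$ need not be a free generator --- consistent with the pathology witnessed in the examples of Section~3.
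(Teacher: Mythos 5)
Your proposal is correct and follows essentially the same route as the paper: use Lemma~\ref{ainv} to get $a(R)=a(R^G)$, conclude from the Gorenstein hypotheses that ${}^*\Hom_{R^G}(R,R^G)\cong R$ as graded $R$-modules (generator in degree $0$), and observe that $\rho$ is a nonzero element of the one-dimensional degree-zero piece, hence not in $\m\,{}^*\Hom_{R^G}(R,R^G)$. You merely spell out the canonical-module identification that the paper compresses into one line.
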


\begin{proof}
Let $a(R)= a(R^G)= s$. Then ${}^*\Hom_{R^G}(R, R^G(s))=R(s)$ (as $R$ and $R^G$ are Gorenstein). So ${}^*\Hom_{R^G}(R, R^G)= R$ as graded $R$-modules. Note that $\rho \in \Hom_{R^G}(R, R^G)_0$ is non-zero. The result follows.
\end{proof}

The following result will be used in the proof of Theorem \ref{complete}.
\begin{lemma}\label{frank}
(with hypotheses as in \ref{hypoth-conseq})
$\frank_{R^G}(R) = 1$.
\end{lemma}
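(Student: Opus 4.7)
The plan is to prove the two inequalities $\frank_{R^G}(R) \geq 1$ and $\frank_{R^G}(R) \leq 1$ separately. The lower bound is essentially immediate: the Reynolds operator $\rho \colon R \to R^G$ is a degree-$0$ graded $R^G$-linear map with $\rho(1) = 1$, so it splits the inclusion $R^G \hookrightarrow R$, exhibiting $R^G$ as a graded free $R^G$-summand of $R$.

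For the upper bound, I would assume a graded direct-sum decomposition $R \cong \bigoplus_{i=1}^{t} R^G(-b_i) \oplus N$ of $R^G$-modules, in which the summand $R^G(-b_i)$ is generated by some homogeneous $u_i \in R$ of degree $b_i$; in particular $b_i \geq 0$. The crucial input is that $R$ and $R^G$ are both graded Gorenstein with $a(R) = a(R^G) = s$ by Lemma \ref{ainv}, so their graded canonical modules are $\omega_R = R(s)$ and $\omega_{R^G} = R^G(s)$. Using the standard identification $\omega_R \cong {}^*\Hom_{R^G}(R, \omega_{R^G})$ for the finite graded extension $R^G \subseteq R$, I would derive
\[
R(s) \cong {}^*\Hom_{R^G}(R, R^G(s)) \cong {}^*\Hom_{R^G}(R, R^G)(s),
\]
so ${}^*\Hom_{R^G}(R, R^G) \cong R$ as graded $R$-modules, and in particular as graded $R^G$-modules.

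Dualizing the assumed decomposition with ${}^*\Hom_{R^G}(-, R^G)$ then gives
\[
R \;\cong\; {}^*\Hom_{R^G}(R, R^G) \;\cong\; \bigoplus_{i=1}^{t} R^G(b_i) \;\oplus\; {}^*\Hom_{R^G}(N, R^G),
\]
so $R$ contains $R^G(b_i)$ as a graded $R^G$-summand for every $i$. But $R^G(b_i)$ has its generator in degree $-b_i$, and since $R$ is concentrated in non-negative degrees this forces $-b_i \geq 0$, i.e., $b_i \leq 0$. Combined with $b_i \geq 0$ one gets $b_i = 0$, so each $u_i$ lies in $R_0 = k$ and is a unit in $R$. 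Consequently $R^G \cdot u_i = R^G$ as submodules of $R$, and the directness of $\bigoplus_{i=1}^{t} R^G u_i$ then immediately forces $t = 1$.

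The hard part is really the graded bookkeeping: one must be careful with the shift conventions so that the identification ${}^*\Hom_{R^G}(R, R^G) \cong R$ comes out with no extra twist. This is exactly the place where the equality $a(R) = a(R^G)$ from Lemma \ref{ainv} enters, and consequently where the hypothesis that $G$ has no non-trivial one-dimensional representation is actually used; without it one would pick up a shift and the degree obstruction that forces $b_i = 0$ would collapse.
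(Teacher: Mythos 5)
Your proof is correct and follows essentially the same route as the paper: both arguments use $a(R)=a(R^G)$ from Lemma \ref{ainv} to identify ${}^*\Hom_{R^G}(R,R^G)\cong R$ with no shift, then dualize the graded free-summand decomposition of $R$ over $R^G$ and extract a contradiction from the degree constraint $R_{<0}=0$. The only cosmetic difference is that the paper argues by contradiction from a single extra summand $R^G(-c)$ with $c>0$ sitting inside $R_+$, while you show all twists $b_i$ vanish and then invoke directness; this is the same mechanism.
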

\begin{proof}
Suppose if possible $\frank_{R^G} R \geq 2$.
 Note the inclusion $R^G \rt R$ splits. Say $R = R^G \oplus L$. It follows that $L$ has a free summand.
 Also note that $L \subseteq R_+$. So $L = R^G(-c) \oplus V$ for some $c > 0$ and some graded $R^G$-module $V$.
 
 By Lemma \ref{ainv} we get $a(R) = a(R^G)$. As they are both Gorenstein we have $ { }^*\Hom_{R^G}(R, R^G) \cong R$.
 However we also have $ { }^*\Hom_{R^G}(R, R^G)  = R^G \oplus R^G(+c) \oplus { }^*\Hom_{R^G}(V, R^G)$. 
 So we get $R_{-c} \neq 0$ which is a contradiction.
\end{proof}

\section{The Artin Case}
\s \label{artin-gor} Throughout we assume that the setup is as in \ref{tony}.  We will also assume that both $R$ and $R^G$ are Gorenstein. We will assume that $G$ has no non-trivial one-dimensional representation over $k$. Furthermore we also assume that 
$R$ (and hence $R^G$) is Artinian.

The main result of this section is
\begin{theorem}\label{main-artin}
(with hypotheses as in \ref{artin-gor}) Let $Q$ be a homogeneous ideal in $R$ with $R/Q$ is Gorenstein, then $R^G/Q^G$ is also a Gorenstein ring. Furthermore 
$a(R/Q) = a(R^G/Q^G)$.
\end{theorem}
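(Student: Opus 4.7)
My plan is to promote the graded $k$-dual of $M := R/Q$ to a graded $R*G$-module, exhibit it as $R*G$-isomorphic to $M(a)$ for $a := a(R/Q)$, and then take $G$-invariants to obtain the analogous self-duality for $R^G/Q^G$, which is equivalent to $R^G/Q^G$ being Artinian Gorenstein with $a$-invariant $a$.

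First I would observe that since $Q$ is $G$-invariant, $M$ inherits a graded $R*G$-module structure, and since $R/Q$ is Artinian Gorenstein its socle is one-dimensional and concentrated in the top degree: $\soc M = M_a = ku$ for some homogeneous $u$. I would then show $u \in M^G$ by repeating verbatim the argument from Lemma \ref{ainv}, now applied to $M$ instead of $R$: for $\sigma \in G$, $\sigma(u) \in \soc M = ku$, so $\sigma(u) = a_\sigma u$, and $\sigma \mapsto a_\sigma$ is a one-dimensional $k$-representation of $G$, hence trivial by hypothesis.

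Next I would equip $M^\vee := {}^*\Hom_k(M, k)$ with its canonical graded $R*G$-structure (where $(rf)(m) := f(rm)$ and $(\sigma f)(m) := f(\sigma^{-1} m)$), and let $u^* \in (M^\vee)_{-a}$ be the functional with $u^*(u) = 1$ that vanishes on the other homogeneous components. Graded Matlis duality for an Artinian Gorenstein local ring supplies an $R$-linear isomorphism $\phi : M(a) \to M^\vee$ sending $1 \mapsto u^*$; the point is that every non-zero ideal of $M$ meets the socle $ku$, so $\phi$ is injective and the dimensions match. To upgrade $\phi$ to an $R*G$-isomorphism I only need $\sigma u^* = u^*$, which follows immediately from $\sigma u = u$ and the definition of the $G$-action on $M^\vee$.

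Finally I would apply $(-)^G$, which is exact because $|G|^{-1} \in k$. Exactness of invariants applied to $0 \to Q \to R \to R/Q \to 0$ gives $M^G = R^G/Q^G$, hence $(M(a))^G = M^G(a) = (R^G/Q^G)(a)$. On the dual side, the Reynolds operator identifies $G$-invariant $k$-linear functionals $f : M \to k$ with arbitrary $k$-linear functionals $\bar f : M^G \to k$ via $f = \bar f \circ \rho$, giving $(M^\vee)^G \cong (M^G)^\vee = (R^G/Q^G)^\vee$. Combining yields $(R^G/Q^G)^\vee \cong (R^G/Q^G)(a)$ as graded $R^G$-modules, which says exactly that $R^G/Q^G$ is Artinian Gorenstein with $a(R^G/Q^G) = a$. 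I expect the main obstacle to be not the central $R*G$-isomorphism itself (which reduces to $G$-invariance of a single socle generator, a direct echo of Lemma \ref{ainv}) but the careful bookkeeping after taking invariants, where the two separate identifications $(M(a))^G = (R^G/Q^G)(a)$ and $(M^\vee)^G = (R^G/Q^G)^\vee$ must be verified as graded $R^G$-modules with compatible shifts.
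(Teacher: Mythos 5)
Your proposal is correct, and it takes a genuinely different route from the paper. The paper dualizes into $R$: it observes ${}^*\Hom_R(R/Q,R)\cong R/Q(s)$ as $R$-modules (using that $R$ itself is Artinian Gorenstein), upgrades this to an $R*G$-isomorphism via Lemma \ref{ginvar-gen}, and then feeds it into the key Lemma \ref{iso}, the natural isomorphism ${}^*\Hom_{R*G}(M,R)\cong {}^*\Hom_{R^G}(M^G,R^G)$, which is proved by a finite free presentation argument resting on Corollary \ref{rho-gen} ($\rho\notin\m\,{}^*\Hom_{R^G}(R,R^G)$) and hence on $a(R)=a(R^G)$ and the Gorenstein property of both $R$ and $R^G$. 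You instead dualize into $k$: the graded Matlis dual $M^\vee$ is identified with $M(a)$ as $R*G$-modules once the socle generator is seen to be $G$-fixed, and the passage to invariants is handled by the elementary identity $(M^\vee)^G\cong (M^G)^\vee$ coming from the Reynolds operator. Both arguments deploy the group-theoretic hypothesis at exactly the same pressure point --- a one-dimensional socle carries a one-dimensional representation of $G$, which must be trivial --- but your version bypasses Lemma \ref{iso} entirely and, notably, never uses that $R$ or $R^G$ is Gorenstein, only that $R$ is Artinian and $R/Q$ is Gorenstein; it also delivers $a(R^G/Q^G)=a(R/Q)$ for free from the shift rather than by a separate appeal to Lemma \ref{ainv}. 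What the paper's heavier formalism buys is reusability: the $\Hom(-,R)$ machinery and the nondegeneracy of $\rho$ are exactly what get transported to the complete local case in Section 8, whereas the $k$-dual argument is tailored to the Artinian setting (though since the complete case is also reduced to an Artinian quotient, your argument would adapt there as well).
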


To prove the main result of this section we need the following result.

\begin{lemma}\label{iso}
For any finitely generated $R*G$-module $M$ the map

\[\eta_M: {}^*\Hom_{R*G}(M, R) \rt  {}^*\Hom_{R^G}(M^G, R^G)\] defined by $\eta_M(u)= u|_{M^G}$ is an isomorphism of $R^G$-modules.
\end{lemma}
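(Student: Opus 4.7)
The plan is to reduce to the base case $M=R*G$ via a finite presentation and the 5-lemma, and then verify the base case directly from Corollary \ref{rho-gen} together with Lemma \ref{ainv}.

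First I would check that $\eta_M$ is well-defined and $R^G$-linear: any $R*G$-linear map $u\colon M\to R$ is in particular $G$-equivariant, so $u(M^G)\subseteq R^G$; and since $R^G$ lies in the center of $R*G$, scaling by $R^G$ commutes with restriction to $M^G$. The assignment $M\mapsto \eta_M$ is natural in $M$.

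For the reduction, both $F(M)={}^*\Hom_{R*G}(M,R)$ and $H(M)={}^*\Hom_{R^G}(M^G,R^G)$ are contravariant and left exact on graded $R*G$-modules: the invariants functor $(-)^G$ is exact because $|G|^{-1}\in k$ makes the Reynolds operator a splitting idempotent, and ${}^*\Hom_{R^G}(-,R^G)$ is left exact. Both $F$ and $H$ send finite direct sums to finite direct sums and are compatible with graded shifts. Choosing a finite graded presentation
\[
\bigoplus_i (R*G)(-d_i^{(2)})\longrightarrow\bigoplus_j (R*G)(-d_j^{(1)})\longrightarrow M\longrightarrow 0,
\]
applying $F$ and $H$, and using naturality of $\eta$ yields a commutative diagram of left exact sequences. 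A standard 5-lemma argument reduces the claim to proving $\eta_{R*G(-d)}$ is an isomorphism for every $d$, hence by shift-invariance to the single case $M=R*G$.

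For the base case, by Lemma \ref{P6}(iii) identify $(R*G)^G\cong R$ via $\psi(r)=\sum_\sigma\sigma(r)\sigma$, and identify $F(R*G)\cong R$ via $u\mapsto u(1)$. A direct calculation gives
\[
\eta_{R*G}(u)(\psi(s))=\sum_\sigma\sigma(s)\,\sigma(u(1))=|G|\,\rho\bigl(s\cdot u(1)\bigr),
\]
so under these identifications $\eta_{R*G}$ sends $r\in R$ to $|G|\cdot(r\rho)$, where $(r\theta)(s)=\theta(sr)$ is the $R$-action introduced in the lemma preceding Lemma \ref{P6}. The proof of Corollary \ref{rho-gen} shows that ${}^*\Hom_{R^G}(R,R^G)\cong R$ as graded $R$-modules and exhibits $\rho$ as a generator, so $r\mapsto r\rho$ is a surjection between Artinian $R$-modules of equal length, hence an isomorphism; the factor $|G|$ is a unit and preserves this. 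The main obstacle is this base-case bookkeeping---specifically, getting the correct $R$-action on ${}^*\Hom_{R^G}(R,R^G)$ and extracting from Corollary \ref{rho-gen} that $\rho$ is a full generator (not merely minimal). Once that is secured, the 5-lemma step is mechanical.
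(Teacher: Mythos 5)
Your proposal is correct and follows essentially the same route as the paper: reduce to the free case via a finite presentation, left exactness of both Hom functors (using exactness of $(-)^G$), and the five lemma, then settle the base case $M=R*G$ by the explicit computation identifying $\eta_{R*G}$ with multiplication by $|G|\rho$ and invoking Corollary \ref{rho-gen} (with Lemma \ref{ainv}) to see that $\rho$ generates ${}^*\Hom_{R^G}(R,R^G)\cong R$. The only cosmetic difference is that you close the base case by a length count of Artinian modules where the paper uses that a surjective $R$-linear map onto the cyclic module ${}^*\Hom_{R^G}(R,R^G)\cong R$ must be an isomorphism; both are valid.
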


\begin{proof}
We first show \[\eta_{R*G}: {}^*\Hom_{R*G}(R*G, R) \longrightarrow  {}^*\Hom_{R^G}((R*G)^G, R^G)\] is an isomorphism. 
We note that both $R*G$ and $(R*G)^G$ have natural structures as right $R$-modules. By Lemma 2.9 we get that $\eta_{R*G}$ is in-fact $R$-linear.

Note that we have an isomorphism of $R$-modules 
\begin{align*}
{}^*\Hom_{R*G}(R*G, R) &\longrightarrow  R\\
f &\longrightarrow f(1)
\end{align*}
We can define the inverse map $\phi: R \rt {}^*\Hom_{R*G}(R*G, R)$ by $\phi(r)=f_r$ where $f_r(s)=sr$ for all $s \in R*G$.

Thus we get maps of $R$-modules 
\[R \overset{\Phi}{\rt} {}^*\Hom_{R*G}(R*G, R) \overset{\eta}{\rt}{}^*\Hom_{R^G}((R*G)^G, R^G) \simeq {}^*\Hom_{R^G}(R, R^G) \simeq R.\] Notice that $\psi: R \to {}^*\Hom_{R^G}(R, R^G)$ defined by $\psi(1)= \rho$ is an isomorphism (as 
$a(R)=a(R^G)$; see \ref{rho-gen}. Let $\xi= \sum_\sigma \sigma(r)\sigma \in (R*G)^G$. Then  $[(\eta \circ \Phi)(1)](\xi)= \eta(\Phi(1))\left(\sum \sigma(r)\sigma\right)= \left(\sum_\sigma \sigma(r)\sigma\right)(1)= \sum_\sigma \sigma(r)=|G| \rho(r)$. Thus $\eta \circ \Phi=|G| \rho$ is a generator of ${}^*\Hom_{R^G}((R*G)^G, R^G)$. Since $\eta \circ \Phi$ is $R$-linear so we get $\eta \circ \Phi$ is an isomorphism. Hence $\eta$ is an isomorphism.

It is easy to verify that if $M, N$ are graded $R*G$-modules and $\alpha: M \to N$ is $R*G$-linear then we have a commutative diagram of $R^G$-modules

\begin{align*}
\xymatrixrowsep{2.5pc} \xymatrixcolsep{5pc}
\xymatrix{
 {}^*\Hom_{R*G}(N, R) \ar[r]^{\Hom(\alpha, R)} \ar[d]^{\eta_N}&  {}^*\Hom_{R*G}(N, R)\ar[d]^{\eta_M}\\
{}^*\Hom_{R^G}(N^G, R^G) \ar[r]^{\Hom(\alpha^G, R^G)}& {}^*\Hom_{R^G}(M^G, R^G)}
\end{align*}
Furthermore, as $\eta_{R*G}: {}^*\Hom_{R*G}(R*G, R) \longrightarrow  {}^*\Hom_{R^G}((R*G)^G, R^G)$ is an isomorphism we get $\eta_F$ is also isomorphism for any graded finite free $R*G$-module $F$. 

We now prove $\eta_M$ is an isomorphism for any finitely generated graded $R*G$-module $M$. We have finite presentation of $M$ (as $R*G$ is a Noetherian ring) \[F \overset{\theta}{\rt} L \overset{\epsilon}{\rt} M \rt 0\] as graded $R*G$-module with $F, L$ finite free graded $R*G$-modules. Consider the following diagram,
\[
  \xymatrix
{
 0
 \ar@{->}[r]
  & {}^*\Hom_{R*G}(M, R)
    \ar@{->}[d]^{\eta_M}
\ar@{->}[r]
 & {}^*\Hom_{R*G}(L, R)
    \ar@{->}[d]^{\eta_L}
\ar@{->}[r]
&  {}^*\Hom_{R*G}(F, R)
    \ar@{->}[d]^{\eta_F}
 \\
 0
 \ar@{->}[r]
  & {}^*\Hom_{R^G}(M^G, R^G)
  \ar@{->}[r]
 & {}^*\Hom_{R^G}(L^G, R^G)
\ar@{->}[r]
& {}^*\Hom_{R^G}(F^G, R^G)
}
\]

Since $\eta_L$ and $\eta_F$ are isomorphisms so we get $\eta_M$ is also an isomorphism.
\end{proof}

We now give
\begin{proof}[Proof of Theorem \ref{main-artin}]
$R/Q$ is an $R*G$-module. So ${}^*\Hom_R(R/Q, R)$ is also an $R*G$-module. 
But ${}^*\Hom_R(R/Q, R) \cong R/Q(s)$ for some $s \in \Z$ as $R$-modules.
By Lemma \ref{ginvar-gen},  ${}^*\Hom_R(R/Q, R) \cong R/Q(s)$ as $R*G$-modules.
So \\ ${}^*\Hom_R(R/Q, R)^G \cong R^G/Q^G(s)$ as $R*G$-modules.
Now by Lemma \ref{iso} we have the isomorphism of
$R^G$-modules 
\[R^G/Q^G(s) \cong {}^*\Hom_R(R/Q, R)^G \cong {}^*\Hom_{R*G}(R/Q, R) \cong {}^*\Hom_{R^G}(R^G/Q^G, R^G).\] 
As $R^G$ is Gorenstein and $\dim R^G/Q^G= \dim R^G=0$. So ${}^*\Hom_{R^G}(R^G/Q^G, R^G) \cong \omega_{R^G/Q^G}$ up-to shift. Thus it follows that $R^G/Q^G$ is a Gorenstein ring. The assertion $a(R^G/Q^G) = a(R/Q)$ follows from Lemma \ref{ainv} (as $(R/Q)^G \cong R^G/Q^G$).
\end{proof}
\section{Proof of Theorem \ref{main-graded}}

In this section we give:
\begin{proof}[Proof of Theorem \ref{main-graded}]
We note that $G$ acts on $A/Q$. So $A^G/Q^G$ is a Cohen-Macaulay ring. 
Notice by Remark \ref{intrormk}(1) we have $A^G$ is a Gorenstein ring.
Notice that $\hgt Q= \hgt Q^G = \text{(say equal to)} \ g$.
Let $\underline{\bf u}= u_1, u_2, \ldots, u_g \in Q^G$ be an $A^G$-linear sequence.
Then $\underline{\bf u}$ is also an $A$-linear sequence. Set $S = A/(\underline{\bf u})A$. Then 
$S^G=A^G/(\underline{\bf u})A^G$. Now $Q/(\underline{\bf u})A$ is a Gorenstein ideal in $S$ and $\left(Q/(u)A \right)^G= Q^G/(\underline{\bf u})A^G$. Thus it is suffices to prove for $S$ and when $\hgt(\overline{Q})= \hgt(\overline{Q^G}) = 0$. Also note that $S^G$ is a Gorenstein ring.

Let $\underline{\bf y}= y_1, y_2, \ldots, y_r \in S^G$ be a homogeneous system of parameter. Note that $\underline{\bf y}$ is also a homogeneous system of parameter of $S$. Clearly $S/\underline{Q}$ is a maximal Cohen-Macaulay $S$-module (as $\dim S/\overline{Q}= \dim S-\hgt \overline{Q}= \dim S$). So $\underline{\bf y}$ is a $S/\overline{Q}$-regular sequence. Note that $S/(\overline{Q}, \underline{\bf y})= S/(\underline{\bf y})/(\overline{Q}, \underline{\bf y})/(\underline{\bf y})$ is  Gorenstein. 

\noindent
{\it Claim}: $(\overline{Q}, \underline{\bf y}S)^G= (\overline{Q}^G, \underline{\bf y}S^G)$. 

\noindent
{\it Proof.} Clearly $ (\overline{Q}^G, \underline{\bf y}S^G)\subseteq (\overline{Q}, \underline{\bf y}S)^G$. Let $\xi \in (\overline{Q}, \underline{\bf y}S)^G$. Then $\xi= \alpha+ \sum_{i=1}^s y_is_i$. Thus $\xi= \rho(\xi)= \rho(\alpha)+ \sum_{i=1}^s y_i \rho(s_i) \in (\overline{Q}^G, \underline{\bf y}S^G)$.

Also note that $S^G/\overline{Q^G}$ is a maximal Cohen-Macaulay $S^G$-module. So $\underline{\bf y}= y_1, \ldots, y_r$ is a $S^G/\overline{Q^G}$-linear sequence. Also $S^G/\overline{Q^G}$ is a Gorenstein ring if and only if $S^G/(\overline{Q^G}, \underline{\bf y}S^G)$ is a Gorenstein ring. Set $R=S/(\underline{\bf y})S$ and $q= \overline{Q}+(\underline{\bf y})S/(\underline{\bf y})S$ is a $G$-invariant Gorenstein ideal in $R$. $R$ is a standard graded Artin ring. Clearly $R^G= S^G/(\underline{\bf y})S^G$ and $q^G= \overline{Q^G}+(\underline{\bf y})S^G/(\underline{\bf y})S^G$. By Theorem \ref{main-artin} we have $R^G/q^G$ is a Gorenstein ring and that $a(R^G/q^G) = a(R/q)$. But $R^G/q^G= S^G/ \overline{Q^G}+(\underline{\bf y})S^G$. So $S^G/\overline{Q^G}$ is a Gorenstein ring. Thus $A^G/Q^G=S^G/\overline{Q^G}$ is a Gorenstein ring. Furthermore a routine calculation also yields $a(A^G/Q^G) = a(A/Q)$; see \cite[3.6.14]{BH}.
\end{proof}

\section{Complete case}

Throughout this section the setup is as in Theorem \ref{complete}.

\s \label{basis-setup} The following fact is well-known. Set $A = k[X_1,\ldots, X_n]$ and let $A^G$ be the ring of invariants of $A$ \wrt \ $G$.
Let $\m = (X_1, \ldots, X_n)$ and set $\m^G = \m \cap A^G$. Then $R^G$ is the completion of $A^G$ \wrt \ $\m^G$.
By Lemma \ref{frank} we get $\frank_{A^G} A = 1$. It is then easy to verify that $\frank_{R^G} R = 1$. Let 
$\by = y_1, \ldots, y_n$ be a system of parameters in $R^G$. Set $S = R/(\by)R$. Note $S^G = R^G/(\by)R^G$.
It is also easy to check that $\frank_{S^G} S = 1$. Also note that $S, S^G$ are Gorenstein rings.

The following two results do not follow from the graded case.
\begin{lemma}\label{n-follow}
 (with hypothesis as in \ref{basis-setup}). Let $\n$ be the maximal ideal of $S$ and let $Q$ be a $G$-invariant ideal of $S$.
 Set $\rho \colon S \rt S^G$  to be the Reynolds operator. We then have 
 \begin{enumerate}[\rm (1)]
  \item
  $\rho \notin \n \Hom_{S^G}(S, S^G)$.
  \item
  If $\Hom_S(S/Q, S) \cong S/Q$ as $S$-modules then it also isomorphic as $S*G$-modules.
 \end{enumerate}
 \end{lemma}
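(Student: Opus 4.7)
\bigskip

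\noindent\textbf{Proof plan for Lemma \ref{n-follow}.} Both parts rest on the absence of non-trivial one-dimensional representations of $G$ over $k$ plus the Gorenstein/Artinian structure of $S$ and $S^G$. Before doing anything else, I would record the standard identification $\Hom_{S^G}(S, S^G) \cong \omega_S \cong S$ as $S$-modules: the first isomorphism is because $S$ is a maximal Cohen-Macaulay $S^G$-module (Artinian case) and $S^G$ is Gorenstein, so $\Hom_{S^G}(S, S^G)$ is a canonical module of $S$; the second is because $S$ itself is Gorenstein Artinian and $\frank_{S^G} S = 1$ (as recorded in \ref{basis-setup}), so no shift is needed. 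In particular, $\Hom_{S^G}(S, S^G)$ is a \emph{cyclic} $S$-module, and by Nakayama $\rho$ generates it if and only if $\rho \notin \n \Hom_{S^G}(S, S^G)$.

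For part (1), I would exploit the socle. Let $u$ be a generator of $\soc(S) = (0 :_S \n)$, which is one-dimensional over $k$ since $S$ is Artinian Gorenstein. Running the argument of Lemma \ref{ainv} verbatim in the local Artinian setting, each $\sigma \in G$ maps $\soc(S)$ to itself (because $\sigma(\n) = \n$), so $\sigma(u) = a_\sigma u$ for some $a_\sigma \in k^\ast$. The map $\sigma \mapsto a_\sigma$ is a one-dimensional representation of $G$ over $k$; by hypothesis it is trivial, so $u \in S^G$ and hence $\rho(u) = u \neq 0$. Now suppose for contradiction that $\rho = \sum s_i \phi_i$ with $s_i \in \n$ and $\phi_i \in \Hom_{S^G}(S, S^G)$. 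Evaluating at $u$, using $S^G$-linearity of each $\phi_i$ and the fact that $s_i u = 0$, we obtain $\rho(u) = \sum \phi_i(s_i u) = 0$, contradicting $\rho(u) = u \neq 0$. This forces $\rho \notin \n \Hom_{S^G}(S, S^G)$.

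For part (2), set $M = \Hom_S(S/Q, S)$, which carries a natural $S \ast G$-module structure as in Lemma \ref{P5} because $Q$ is $G$-invariant. Assume $M \cong S/Q$ as $S$-modules; then $M$ is cyclic with $\Ann_S M = Q$. Pick any $S$-module generator $v \in M$. Reducing modulo $\n M$ gives a one-dimensional $k$-vector space $M/\n M$ on which $G$ acts by characters: $\sigma(v) \equiv b_\sigma v \pmod{\n M}$ with $b_\sigma \in k^\ast$, and $\sigma \mapsto b_\sigma$ is a group homomorphism $G \to k^\ast$. By our hypothesis it is trivial, so $\sigma(v) - v \in \n M$ for every $\sigma \in G$. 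The natural candidate for a $G$-invariant generator is then the averaged element $v' = (1/|G|)\sum_{\sigma \in G}\sigma(v)$; by construction $v' \in M^G$ and $v' \equiv v \pmod{\n M}$, so $v'$ generates $M$ by Nakayama. The $S$-linear surjection $\phi \colon S \to M$ sending $1 \mapsto v'$ is then $G$-equivariant because $v'$ is $G$-invariant, and $\ker \phi = \Ann_S(v') = \Ann_S(M) = Q$ since $v'$ generates $M$. Thus $\phi$ descends to an $S\ast G$-module isomorphism $S/Q \xrightarrow{\ \sim\ } M$.

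The main obstacle I anticipate is ensuring, in the unmixed Artinian-local setting, that the identification $\Hom_{S^G}(S, S^G) \cong S$ in part (1) really is an \emph{equality of cyclic $S$-modules with no shift}, which is where $\frank_{S^G} S = 1$ from Lemma \ref{frank} (transported to $S$ via the filtration by $(\by)$ in \ref{basis-setup}) is essential. Once this is in hand, the socle computation and the averaging trick are routine and mirror the graded arguments of Lemma \ref{ainv} and Lemma \ref{ginvar-gen}.
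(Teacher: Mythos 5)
Your proof is correct. Part (2) is essentially the paper's argument: both of you observe that $W/\n W$ is a one-dimensional $G$-module, hence trivial by hypothesis, and then replace a generator $u$ by its average $\rho(u)$, which is $G$-invariant and still a generator by Nakayama (the paper phrases this as $\rho(u)=(1+r)u$ with $r\in\n$; you phrase it as $\rho(v)\equiv v \pmod{\n M}$ --- same thing). Part (1), however, takes a genuinely different route. The paper evaluates the hypothetical relation $\rho=\sum a_i\phi_i$ at $1$, gets $1=\sum\phi_i(a_i)$, splits $S=S^G\oplus L$ via $\rho$, and deduces that $L$ would have a free $S^G$-summand, contradicting $\frank_{S^G}S=1$ from \ref{basis-setup}. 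You instead evaluate at a socle generator $u$ of the Artinian Gorenstein ring $S$: after running the character argument of Lemma \ref{ainv} to see that $u\in S^G$ (so $\rho(u)=u\neq 0$), the relation gives $\rho(u)=\sum\phi_i(s_iu)=0$ since $s_i\in\n$ kills the socle --- an immediate contradiction. Your version is shorter and bypasses Lemma \ref{frank} entirely, at the cost of re-running the socle-invariance argument in the local (rather than graded) setting; the paper's version reuses the already-established free-rank computation. Both ultimately rest on the same group-theoretic input. One small remark: your opening identification $\Hom_{S^G}(S,S^G)\cong S$ is not actually needed for either part as you argue them, and in the local Artinian case the isomorphism $\omega_S\cong S$ needs only that $S$ is Gorenstein, not $\frank_{S^G}S=1$ (the rank condition matters for controlling the shift in the graded setting); this is a harmless imprecision in an unused preamble.
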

\begin{proof}
(1) Suppose if possible $\rho \in\n \Hom_{S^G}(S, S^G)$.  Say
\[
 \rho = \sum_{i = 1}^{s}a_i \phi_i \quad \text{for some} \ a_i \in \n \ \text{and} \ \phi_i \in \Hom_{S^G}(S, S^G).
\]
We get
\[
 1 = \rho(1) = \sum_{i = 1}^{s}\phi_i(a_i).
\]
It follows that for some $j$ we have $\phi_j(a_j) = v$ is a unit in $S^G$. If $S = S^G \oplus L$ is a splitting of $S$ as $S^G$-modules
via $\rho$ then notice $\n = \n^g \oplus L$.
Say $a_j = b + l$ where $b \in \n^G$ and $l \in L$. Then
$v = \phi_j(a_j) = b\phi_j(1) + \phi_j(l)$. It follows that $\phi_j(l)$ is a unit in $S^G$. Thus $L$ has $S^G$ as a free summand.
So $\frank_{S^G} S \geq 2$. This is a contradiction, see \ref{basis-setup}.

(2) Let $u$ be a generator of $W = \Hom_S(S/Q, S)$. We note that $W$ and $\n W$ are $S*G$-modules.
So $W/\n W$ is also a $S*G$-module. But $W/\n W \cong k$ as $k$-vector spaces. By our assumption on $G$ it follows that
$W/ \n W$ is the trivial $G$-module $k$. It follows that for $\sigma \in G$ we have
$\sigma(u) = u + r_\sigma u$ for some $r_\sigma \in \n$. Therefore
\[
 \rho(u) = u + ru \quad \text{for some} \ r \in \n.
\]
Thus $\rho(u) = (1 + r ) u$. It follows that $\rho(u) \in W^G$ is also a generator of $W$. The result follows. 
\end{proof}
We now give
\begin{proof}(Sketch of a proof of Theorem \ref{complete})
As in the proof of Theorem \ref{main-graded}, after going mod a suitable system of parameters of $R^G$ we may assume
(as in \ref{basis-setup}) that $S, S^G$ are Artin Gorenstein rings and $Q$ is a $G$-invariant ideal in $S$ with $S/Q$ Gorenstein.

As $S$ is Artinian we have $\Hom_S(S/Q, S) \cong S/Q$ as $S$-modules. By Lemma \ref{n-follow}(2) we also have
$\Hom_S(S/Q, S) \cong S/Q$ as $S*G$-modules. As in the proof of Lemma \ref{iso} we get that for any $S*G$ module $M$ we have 
a natural isomorphism of $S^G$-modules
\[
 \eta_M \colon \Hom_{S*G}(M, S) \rt \Hom_{S^G}(M^G, S^G)
\]
(The proof in Lemma \ref{iso} essentially uses the fact that $\rho \notin \n\Hom_{S^G}(S, S^G)$ and that both $S, S^G$ are
Gorenstein).
Thus as $S/Q \cong \Hom_S(S/Q, S)$ as $S*G$-modules we have the following isomorphisms as $S^G$-modules:
\[
 S^G/Q^G  \cong \Hom_{S}(S/Q, S)^G \cong \Hom_{S*G}(S/Q, S) \cong \Hom_{S^G}(S^G/Q^G, S^G).
\]
As $S^G$ is an Artinian Gorenstein ring the result follows.

\end{proof}

\section*{Acknowledgements}
I thank Sudeshna Roy for help in typing this paper.
I also thank Prof. K. Watanabe for some helpful discussions.

\end{document}